\newcommand\NN{\mathbb N}
\newcommand\tr{\operatorname{tr}}
\theoremstyle{plain}
\newtheorem{thm}{Theorem}[section]
\newtheorem{prop}[thm]{Proposition}
\newtheorem{cor}[thm]{Corollary}
\theoremstyle{definition}
\theoremstyle{remark}
\newtheorem{remark}[thm]{Remark}
\title[\sc Bispectrality and Time-Band-Limiting: Matrix valued polynomials.]{Bispectrality and Time-Band-Limiting: Matrix valued polynomials.}
\author{Gr\"unbaum F. A., Pacharoni  I. and  Zurri\'an I.}
\address{Department of Mathematics, University of California, Berkeley
CA 94705}
\email{grunbaum@math.berkeley.edu}
\address{CIEM-FaMAF, Universidad Nacional de C\'ordoba, C\'ordoba~5000, Argentina}
\email{pacharon@famaf.unc.edu.ar}
\address{CIEM-FaMAF, Universidad Nacional de C\'ordoba, C\'ordoba~5000, Argentina}
\email{zurrian@famaf.unc.edu.ar}
\date{\today}
\thanks{This research was supported in part by CONICET grant PIP
112-200801-01533, SeCyT-UNC and FONDECYT 3160646. The work of the first author is partially supported by AFOSR through FA9550-16-1-0175}
\subjclass[2010]{33C45, 22E45, 33C47}
\keywords{Time-band limiting, Double concentration, Matrix valued orthogonal polynomials}
\begin{document}

\begin{abstract} 
	The subject of time-band-limiting, originating in signal processing, is dominated by the miracle that a naturally appearing integral operator admits a commuting differential one allowing for a numerically efficient way to compute its eigenfunctions. Bispectrality is an effort to dig into the reasons behind this miracle and goes back to joint work with H. Duistermaat. This search has revealed unexpected connections with several parts of mathematics, including integrable systems.

	Here we consider a matrix valued version of bispectrality and give a general condition under which we can display a constructive and simple way to obtain the commuting differential operator. 
 Furthermore, we build an operator that commutes with both the time-limiting operator and the band-limiting operators.
\end{abstract}
\maketitle

\section{Introduction}

 The problem of double concentration, i.e. localizing a function both in physical and frequency space cuts across several areas of mathematics, physics and engineering.
 This topic arises in harmonic analysis, signal processing and quantum mechanics.  Highly elaborate bodies of work, such as wavelet theory, spawn from efforts to find a good compromise between these two competing goals.

In some instances this issue gives rise to a sharply posed question as was done (at least implicitly) by C. Shannon, \cite{S}: if you know the frequency components over a band $[-W,W]$ for an unknown signal of finite support in $[-T,T]$, what is the best use you can do of this (noisy) data?
 It is natural to look for the coefficients of an expansion of the unknown signal in terms of the singular functions of the problem. However, one faces a serious computational difficulty: these singular functions are the eigenfunctions of an integral operator with most of its eigenvalues crowded together.

 In a remarkable series of papers written at Bell Labs in the 1960's a mathematical miracle was uncovered, and exploited very  successfully. We refer to it as the "time-band limiting phenomenon".
We are alluding to the
surprising fact that certain naturally appearing integral operators admit second order commuting differential ones.

\medskip

  One of us has been looking for the reason that lies behind this miracle for quite a while and this
search has given rise to what we refer to as the "bispectral problem". In our context this
consists in the search for weights whose orthogonal polynomials are joint eigenfunctions of some differential operator.

\medskip

There is a large number of papers dealing with the relations between these two issues. For a sample, see \cite{G,DG,G4,G5,GLP,Perlstadt,GY}. For surveys of this and related work, see \cite{G1,G5,G7}. We feel that the true reasons behind this remarkable algebraic "accident", see \cite{Slep,Walter},  deserves further study.

The phenomenon of a pair of commuting integral and differential operators plays an important role in at least three areas of applied mathematics: the problem of time-and-band limiting studied by Slepian, Landau and Pollak, see \cite{SLP1,SLP2,SLP3,SLP4,SLP5}, nicely summarized in \cite{Slep,Lan}, the problem of limited angle tomography, see \cite{G6}, and finally in Random Matrix Theory, see \cite{M,TW,TWB}.
For other applications of this work, see \cite{JB,JKS,SD,SDW}. For numerical aspects of this phenomenon, see \cite{ORX}. All of the work mentioned above deals with scalar valued
functions.

\medskip

A much more recent look at the relation between these two topics involves matrix valued orthogonal polynomials, a subject started by M. G. Krein, see \cite{K1,K2}. The papers where this relation has been explored recently are \cite{CG5,GPZ1,GPZ2,CGPZ,CG7}.

\medskip

The list of references given above is pretty complete with one exception. Following \cite{Perlstadt} there is a short and elegant paper by Perline, see \cite{Perline}. One of us was certainly aware of this paper back in the late 1980's, but somehow  did not pay enough attention to it. After the completion of \cite{GVZ_Heun} it was A. Zhedanov who noticed this long forgotten paper and brought it the attention of his coworkers. The very recent paper \cite{GVZ1_Heun} shows that the ideas in \cite{Perline} can be extended to other scenarios.

\medskip

	The aim of this paper is to give a general result on the relation between the bispectral property for matrix valued orthogonal polynomials and the existence of a symmetric operator that commutes with the time-and-band limiting operator and can be used to yield their eigenfunctions.
For any value of the relevant parameters we build explicitly a  second order differential operator $T$ and a tridiagonal  difference operator $L$ that commute with {\bf both} the time-limiting operator and the band-limiting operator. This proves, in a constructive way, the existence of commuting operators for the integral and the difference operators.

This general result, as well as those in \cite{GVZ1_Heun}, is inspired by the construction in \cite{Perline}.

\smallskip

Finally, in Section \ref{ex}, after a brief mention of scalar cases, we use our general results to study some particular examples, all of them in the matrix valued case.

In the first example we extend results previously obtained in \cite{GPZ1,GPZ2};
in the second one we verify a result that was conjectured in \cite{CG7};
in the third example we exploit the power of our construction to give a commuting differential operator for a case where the commuting operator problem was not studied before;
the last example is included to indicate that bispectrality may not always guarantee the existence of a commuting differential operator.

\smallskip

In the scalar case treated in \cite{Perline}, the issue of the use of the commuting differential operator to obtain the eigenfunctions of the integral one was not dealt in detail. In this paper we take the same approach and intend to return to this point at a later time.

\section{Preliminaries}

Let $W(x)$ be an $R\times R$    matrix weight function    in the open interval $(a,b)$ and let  \{$Q_n(x)\}_{n\in\NN_0}$ be a sequence of  matrix orthonormal polynomials with respect to the weight $W(x)$.

\smallskip
The Hilbert spaces $\ell^2(M_R, \NN_0)$ and  $L^2((a,b), W(t)dt)$ are given by the real valued $R\times R$ matrix sequences
$\{C_n\}_{n\in \NN_0}$ such that $\sum_{n=0}^\infty \tr \left( C_n\,C_n^*\right) < \infty$  and all measurable matrix valued functions $f(x)$, $x\in (a,b)$, satisfying $\int_a^b \tr\left(f(x)W(x)f^*(x)\right)dx < \infty $, respectively.
%Given these spaces,
A natural   analog of the Fourier transform is the isometry $F:\ell^2(M_R,\NN_0) \longrightarrow L^2(W)$ given by
$$\{C_n\}_{n=0}^\infty \overset{F}{\longmapsto} \sum_{n=0}^\infty C_n Q_n(x).$$
 In the case when the matrix polynomials are dense in $L^2(W)$,  this map is unitary with the inverse $F^{-1}: L^2(W)\longrightarrow \ell^2(M_R,\NN_0) $ given by
$$ f \overset{F^{-1}}{\longmapsto} C_n=\int_a^b f(x)\,W(x)\, Q^*_n(x) dx.$$

 If we consider the problem of determining a function $f$  from the following (typically noisy) data: $f$ has support on the compact set $[0,N]$ and its Fourier transform $Ff$ is known on a compact set $[a,\Omega]$, one concludes that
we need to compute the singular vectors (and singular values) of the operator $E:\ell^2(M_R,\NN_0)\longrightarrow L^2(W) $ given by
$$E f= \chi_\Omega F \tilde \chi_N f,$$
where $\tilde \chi_N$ is the {\em time limiting  operator} on  $\ell^2(M_R,\NN_0)$ and  $\chi_\Omega$ is the {\em band limiting operator} on  $L^2(W)$.
At level $N$, $\tilde \chi_N$ acts on $\ell^2(M_R,\NN_0)$ by simply setting equal to zero all the components with index larger than $N$. At level $\Omega$, $\chi_\Omega$ acts on $L^2(W)$ by multiplication by the characteristic function of the interval $(a, \Omega)$, $a<\Omega\le b$.

\medskip
 We are thus lead to study the eigenvectors of the operators
$$E^*E= \tilde \chi_N F^{-1} \chi_\Omega F \tilde  \chi_N\qquad \text{ and } \qquad E E^*= \chi_\Omega F \tilde \chi_N F^{-1} \chi_\Omega.$$
%the operators performing time-band-time and band-time-band limitation:

The operator $E^*E$, acting on $\ell^2(M_R,\NN_0)$, is just a finite dimensional block-matrix with each  $R\times R$  block given by
\begin{equation}\label{MatrixM}
 (E^*E)_{m,n}= \int_a^\Omega Q_m(x) W(x) Q^{*}_n(x)  dx, \qquad 0\leq m,n \leq N.
\end{equation}

The operator $E E^*$ acts on $L^2((a,\Omega), W(t)dt)$ by means of the integral kernel
\begin{equation}\label{kernel}
  k(x,y)=\sum_{n=0}^N Q_n^*(x)Q_n(y).
\end{equation}
 The integral operator $S=EE^*$ with kernel $k$, defined in \eqref{kernel}, acting on $L^2((a,\Omega), W)$ ``from the right hand side'' is given by
\begin{equation}\label{intoper}
%    \left(S(f)\right)(x):=
   (fS)(x)=\int_{a}^\Omega f(y)W(y)\big(k(x,y)\big)^*dy.
\end{equation}

\medskip

For general $N$ and $\Omega$ there is no hope of finding the eigenfunctions of $EE^*$ and $E^*E$ analytically. However, there is a strategy to solve this typical inverse problem:
  finding an operator with simple spectrum which would have the same eigenfunctions as the operators $E E^*$ or $E^* E$. This is exactly what Slepian, Landau and Pollak did in the scalar case, when dealing with the unit circle and the usual Fourier analysis. They discovered the following properties:
\begin{itemize}
  \item For each $N$, $\Omega$ there exists a symmetric tridiagonal matrix $L$, with simple spectrum, commuting with $ E^*E$.
  \item For each $N$, $\Omega$ there exists a self-adjoint  second order  differential operator $T$, with simple spectrum, commuting with the integral operator $S=EE^*$.
\end{itemize}

%More than this is true: thanks to this lucky accident one replaces a costly ill-conditioned problem by a manageable well-conditioned one. It is hard to ask for a better situation.

\smallskip
In this paper,   which deals with a continuous-discrete version of the bispectral problem, we give an {\bf explicit construction} of such symmetric operators $L$ and $T$ given certain hypothesis (which is automatically satisfied in the scalar case).

\medskip

%\subsection{Integral and Differential operators}

%Given the sequence of matrix orthonormal  polynomials $\{Q_n\}_{n\geq 0}$ with respect to the weight $W$, we fix a natural number $N$ and $\Omega \in (a,b)$.
%where the kernel $k(x,y)$ is given in \eqref{kernel}.

 %It should be clear that

 Symmetry for an operator ${T}$ acting on functions defined in $[a,\Omega]$  means that
%\bigskip
$$\langle P{T}, Q\rangle_\Omega=\langle P, Q{T} \rangle_\Omega, $$
 for every $P,Q$ in an appropriate dense set of functions,  where
\begin{equation}\label{W-trunc}
	\langle P,Q\rangle_\Omega=\int_{a}^\Omega P(x)W(x)Q^*(x)\, dx.
\end{equation}

From \cite{GPZ2}, given a symmetric differential operator $T$ and an integral operator $S$, with kernel $k$, we have
  \begin{equation}\label{diffkernel}
  T S=ST\qquad \text{ if and only if } \qquad
   \left( k(x,y)^*\right)T_x= (k(x,y)T_y)^*.
  \end{equation}

 (Here we use $T_x$ to stress that $T$ acts on the variable $x$.)

\smallskip

Notice that in principle there is no guarantee that we will find any such ${T}$ except for a scalar multiple of the identity. For the problem at hand, namely the efficient computation of the eigenfunctions of $S$, we need to exhibit a differential operator  ${T}$   whose eigenfunctions are also eigenfunctions of the integral operator $S$. In the scalar case this is guaranteed by asking that ${T}$ should have a simple spectrum. In the matrix valued case the useful requirement on ${T}$ is more subtle and will be analyzed in detail in a future publication.

\section{The symmetric bispectral problem}

We start with a matrix weight $W$ defined in the interval $(a,b)$ and a second order symmetric differential operator $D$ with respect to $W$ of the form
$$D= \partial ^2 F_2 + \partial  F_1 +F_0,$$
with $F_j$ a polynomial of order less than or equal to $j$, for $j=0,1,2.$

 Let $\{R_n\}_{n\geq 0}$ be the monic matrix orthogonal polynomials with respect to $W$ and $\{Q_n\}_{n\geq 0}$ the sequence of orthonormal polynomials defined by $Q_n= S_nR_n$, with $S_n=\|R_n\|^{-1}$  the inverse of the matrix valued  norm of $R_n$.

 We  have  that these polynomials are eigenfunctions of $D$, with matrix-valued eigenvalues,
 \begin{equation}\label{eigenvalues}
   R_n D=  \Lambda_n R_n, \qquad  Q_n D= \tilde \Lambda_n Q_n,\quad \text{ for all } n\geq 0,
 \end{equation}
 with $\tilde \Lambda_n=S_n\Lambda_nS_n^{-1}$.

 They also satisfy the three term recursion relations % of the form
 \begin{equation}\label{ttrrmonic}
 \begin{split}
    xR_n(x)&=  R_{n+1}+ B_n R_n+ A_n R_{n-1},\\
  xQ_n(x)& = \tilde A_{n+1}^* Q_{n+1}+ \tilde B_n Q_n+ \tilde A_n Q_{n-1},
\end{split}
 \end{equation}
where %$\tilde \Lambda_n=S_n\Lambda_nS_n^{-1}$,
\begin{align*}
%\tilde \Lambda_n &=S_n\Lambda_nS_n^{-1} \quad\qquad \qquad \tilde B_n  =S_nB_nS_n^{-1}\\
A_n & = \|R_n\|^2 \|R_{n-1}\|^{-2},& (B_nS_n)&=(B_nS_n)^*,\\
\tilde A_n &=S_nA_nS_{n-1}^{-1}= \|R_n\| \|R_{n-1}\|^{-1}, & \tilde B_n  &=S_nB_nS_n^{-1},
\end{align*}
here we adopt the convention that $P_{-1}=Q_{-1}=0$.

\

The fact that the symmetry of $D$ implies that we have a bispectral situation  as above  has been established in \cite{GPT03,DG04}, 
where the pairs $(W,D)$ are called ``classical pairs".
%HAY QUE HACER ALGUNA REFERENCIA a ANTONIO??? Pusimos DG04

 \

 Recall the setup in the section on Preliminaries.

\noindent We fix a natural number $N$ and ${\Omega} \in (a,b)$ and  consider the following operators $\chi_{\Omega} $ and $\chi_N$ in $L^2(W)$:
  $\chi_{\Omega}$ acts on $L^2(W)$ by multiplication by the characteristic function of the interval $(a, {\Omega})$
  and $\chi_N=\mathcal F\tilde \chi_N \mathcal F^{-1}$ is the ``projection" on the (left) module (over the ring of matrices) spanned  by $\{ Q_0, Q_1, \dots, Q_N\}$.
%  acts on $\ell^2(M_R,\NN_0)$ by simply setting equal to zero all the components with index larger than $N$.
Explicitly,
\begin{equation}\label{chiN}
 \chi_N(f)= \sum_{n=0}^N \langle f, Q_n\rangle Q_n.
\end{equation}

Hence, the band-time-band limiting operator $ EE^*$, that now can be rewritten as $EE^*=\chi_{\Omega} \chi_N\chi_{\Omega}$, is an integral operator acting from the right hand side as in \eqref{intoper},
with kernel
\begin{equation*}
  k(x,y)=\sum_{n=0}^N Q_n^*(x)Q_n(y).
\end{equation*}

 The operator $E^*E$ is the finite dimensional block-matrix given in \eqref{MatrixM}.  Also, now we have that the action of the time-band-time limiting operator $\mathcal F E^*E \mathcal F^{-1} =\chi_N \chi_{\Omega}\chi_N$ is given by
$$\chi_N \chi_{\Omega}\chi_N(f)= \sum_{i=0}^N \left (\int_{a}^{\Omega} f(x)W(x)Q_i^*(x)dx\right) Q_i,$$
for $f\in L^2(W)$.
\

The main result of this section is a simple proof of the existence of a commuting symmetric operator for both of these time and band limiting operators  $EE^*$ and  $\mathcal F E^*E \mathcal F^{-1}$. For this purpose, we will construct an operator $T$ which commutes with {\bf each of} $\chi_N$ and $\chi_{\Omega}$. This important idea already appears in \cite{Perline}. It is also used in the later paper \cite{Walter}.

While this will clearly imply the commutativity with
both  $EE^*$ and  $\mathcal F E^*E \mathcal F^{-1}$ we do not look into the possibility of finding a local operator that commutes with these ones but fails to commute with both $\chi_N$ and $\chi_{\Omega}$.

\bigskip

We  {\bf assume the following hypothesis} on the weight $W$ and the differential operator $D$:  There exists a matrix $M$, independent of the variables $x$, $n$ and the parameter ${\Omega}$, but possibly dependent on $N$, such that
\begin{equation}\label{HYP}
  \Big(M-x (\Lambda_{N+1}+\Lambda_N) \Big)W(x)- W(x)\Big(M-x (\Lambda_{N+1}+\Lambda_N) \Big)^*=0.
\end{equation}

In the expression above the dependence on the differential operator $D$ is hidden in the eigenvalues $\Lambda_N$ of the monic orthogonal polynomials. Explicitly if the differential operator $D$ is of the form $D= \partial ^2 F_2 + \partial  F_1 +F_0$ and we write $F_2=F_{22} x^2+ F_{21} x+F_{20}$, $F_1= F_{11} x+F_{10}$, we have that
\begin{equation}\label{moneig}
\Lambda_n= \Lambda_n(D)=n(n-1)F_{22}+ nF_{11}+F_0.
\end{equation}

 \

From the symmetric differential operator $D$, the eigenvalues of the monic orthogonal polynomials and this matrix  $M$, we build the following differential operator, acting on the ``right-hand side''
\begin{subequations}
\begin{equation}\label{operatorT}
  T= xD+Dx-2{\Omega} D-(\Lambda_{N+1}+\Lambda_N) x+M.
\end{equation}
Let us observe that if $D= \partial ^2 F_2 + \partial  F_1 +F_0$ then
$xD= D x+ 2 \partial \, F_2+F_1.$
 Therefore
\begin{equation}\label{operatorT2}
  \tfrac 12 T= D (x-{\Omega})+ \partial \, F_2(x)+ \tfrac 12 \Big (F_1(x) -x (\Lambda_{N+1}+\Lambda_N)+M\Big).
\end{equation}
\end{subequations}

\smallskip
 \begin{prop}\label{sym}
   The differential operator $T$ is a symmetric operator with respect to $W$, in $[a,b]$ and also in $[a,{\Omega}]$.
 \end{prop}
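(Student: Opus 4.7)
My plan is to split $T$ into a symmetrized differential part and a matrix-valued multiplication part and verify symmetry of each separately. Using $xD+Dx-2\Omega D=(x-\Omega)D+D(x-\Omega)=\{D,x-\Omega\}$, write
\[T=\{D,x-\Omega\}+\bigl(M-x(\Lambda_{N+1}+\Lambda_{N})\bigr).\]
The multiplication part is symmetric with respect to $W$ on any subinterval iff $\bigl(M-x(\Lambda_{N+1}+\Lambda_{N})\bigr)W=W\bigl(M-x(\Lambda_{N+1}+\Lambda_{N})\bigr)^{*}$ pointwise, which is precisely the standing hypothesis \eqref{HYP}. Hence it contributes no asymmetry on either $[a,b]$ or $[a,\Omega]$, and everything reduces to proving that $\{D,x-\Omega\}$ is symmetric on both intervals.

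For the differential part I would use the identity $xD=Dx+2\partial F_{2}+F_{1}$ stated in the excerpt to rewrite $\{D,x-\Omega\}=2D(x-\Omega)+2\partial F_{2}+F_{1}$, a second-order operator with matrix coefficients $G_{2}=2(x-\Omega)F_{2}$ and $G_{1}=2(x-\Omega)F_{1}+2F_{2}$. Its symmetry amounts, via the standard integration-by-parts computation, to (i) Pearson-type identities between $G_{2},G_{1},G_{0}$ and $W$, and (ii) vanishing boundary terms at the endpoints. A short calculation shows the identities in (i) for $\{D,x-\Omega\}$ follow from the analogous Pearson identities already guaranteed by the symmetry of $D$, so no new condition on $W$ or $D$ arises.

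The main step is then the boundary analysis. The boundary form is
\[\mathcal{B}(P,Q)\big|_{x}=\bigl[P'G_{2}WQ^{*}-P(G_{2}WQ^{*})'+PG_{1}WQ^{*}\bigr](x),\]
and after expanding $(G_{2}WQ^{*})'=2F_{2}WQ^{*}+2(x-\Omega)(F_{2}WQ^{*})'$, the $-P\!\cdot\!2F_{2}WQ^{*}$ contribution cancels against the $+2F_{2}$ piece of $G_{1}$, leaving
\[\mathcal{B}(P,Q)\big|_{x}=2(x-\Omega)\,\mathcal{B}_{D}(P,Q)\big|_{x},\]
where $\mathcal{B}_{D}$ is the standard boundary form for $D$ itself. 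On $[a,b]$, $\mathcal{B}_{D}$ vanishes at both endpoints by the standing symmetry of $D$, so $\mathcal{B}$ does too; on $[a,\Omega]$ the same vanishing holds at $a$, while at $x=\Omega$ the prefactor $(x-\Omega)$ makes $\mathcal{B}$ vanish automatically. Combining with the multiplication part gives symmetry of $T$ on both intervals.

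The main obstacle I anticipate is precisely this boundary-form factorization: it is the delicate cancellation in which the $+2\partial F_{2}+F_{1}$ correction — equivalently, the symmetrization via $xD+Dx$ rather than a one-sided $Dx$ — is exactly what is needed to absorb the derivative of the prefactor $(x-\Omega)$. Once that identity is in hand, the interior point $x=\Omega$ is handled for free, which is what makes the construction of $T$ work uniformly in $\Omega$ without any additional boundary hypothesis on $W$ at $\Omega$.
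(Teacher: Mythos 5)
Your proof is correct and follows essentially the same route as the paper: the paper likewise isolates the zeroth-order part $M - x(\Lambda_{N+1}+\Lambda_N)$ via hypothesis \eqref{HYP} and reduces symmetry on $[a,\Omega]$ to the boundary conditions at $\Omega$, which it verifies through $\tilde F_2 = (x-\Omega)F_2$ and $\tilde F_1 W - W\tilde F_1^* = (x-\Omega)(F_1W - WF_1^*) + (F_2W - WF_2^*)$ --- precisely the cancellation you identify between the derivative of the prefactor $(x-\Omega)$ and the $F_2$ correction in the first-order coefficient. Your boundary-form factorization $\mathcal{B} = 2(x-\Omega)\,\mathcal{B}_D$ is a valid, slightly more self-contained packaging of the paper's appeal to the symmetry criterion \eqref{symmeq}--\eqref{boundary} from \cite{GPT03,DG04}, but the underlying algebra is identical.
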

 \begin{proof}
Since $D$ is symmetric with respect to $W$ in $[a,b]$ it is clear that $xD+Dx$ and $2{\Omega} D$ are also   symmetric operators in $[a,b]$.
	 Hence, from \eqref{operatorT}, for any  smooth enough  functions $f,g\in L^2(W)$ we have
\begin{align*}
\langle f T,g\rangle-\langle f,gT\rangle=\int_{a}^{b} f(x)\left(M-x (\Lambda_{N+1}+\Lambda_N) \right)W(x)- W(x)\left(M-x (\Lambda_{N+1}+\Lambda_N) \right)^*g(x)\, dx.
\end{align*}

Thus, we have that $T$ is a symmetric operator in $[a,b]$ if and only if the operator of order zero $M-x (\Lambda_{N+1}+\Lambda_N)$ satisfies \eqref{HYP}.

Now we will prove that $T$ is symmetric  with respect to $W$ in $[a,{\Omega}]$.
% For this, by \eqref{HYP}, it is is enough to prove that $xD+Dx-2{\Omega} D$ is symmetric with respect to $W$ in $[a,{\Omega}]$.

  From \cite{GPT03} or \cite{DG04} we have that a differential operator $D=\frac{d^2}{dx^2} F_2(x)+\frac{d}{dx} F_1(x)+F_0$
is symmetric with respect to a weight $W$ defined in $(a,b)$ if and only if it satisfies, for $a<x<b$, the symmetry equations
%\begin{equation*} %\left\{
\begin{equation} \label{symmeq}
  \begin{split}
  F_2 W & =WF_2^*,\\
   2(F_2W)'-F_1W &=WF_1^*,\\
 (F_2W)''-(F_1W)'+F_0W &=WF_0^*,
  \end{split}
\end{equation}
and  the boundary conditions
\begin{equation}\label{boundary}
  \lim_{x\to a,b} F_2(x)W(x)=0, \quad \lim_{x\to a,b} \big (F_1(x)W(x)-WF_1^*(x)\big)=0.
\end{equation}

We have the following relations among the coefficients of the differential operators $D=\partial ^2 F_2 + \partial  F_1 +F_0$ and $T=\partial ^2 \tilde F_2 + \partial  \tilde F_1 +\tilde F_0 $,
\begin{align*}
  \tilde F_2&= (x-{\Omega}) F_2,\\
  \tilde F_1&= (x-{\Omega}) F_1+ F_2,\\ %= (n+3)x^2+ x \Big(  \left(\begin{smallmatrix}     0&2\\2&0  \end{smallmatrix}\right) -{\Omega}(n+2) I\Big)-I  -2{\Omega} \left(\begin{smallmatrix}     0&1\\1&0  \end{smallmatrix}\right) \\
  \tilde F_0&= (x-{\Omega}) F_0+ \tfrac 12 \big(F_1(x) -x (\Lambda_{N+1}+\Lambda_N)+M\big).
\end{align*}

Since $T$ is a symmetric operator with respect to the weight $W$ in the interval $(a,b)$ we have that $\{\tilde F_0,\tilde F_1,\tilde F_2\}$ satisfy \eqref{symmeq} and \eqref{boundary}. Then, to prove that $T$ is symmetric in $(a,{\Omega})$ it suffices to prove that
\begin{equation}
\lim_{x\to {\Omega}} \tilde F_2(x)W(x)=0, \quad \lim_{x\to {\Omega}} \big (\tilde F_1(x)W(x)-W\tilde F_1^*(x)\big)=0.
\end{equation}

Since $D$ is symmetric with respect to the weight $W$ in the interval $(a,b)$ we have that $\{F_0,F_1, F_2\}$ also satisfy \eqref{symmeq}, thus
$$\lim_{x\to {\Omega}} \tilde F_2W= \lim_{x\to {\Omega}} (x-{\Omega}) F_2W=0,$$
and
$$\lim_{x\to {\Omega}} \big (\tilde F_1(x)W(x)-W\tilde F_1^*(x)\big)= \lim_{x\to {\Omega}} \Big ((x-{\Omega})( F_1W-W F_1^*) +F_2W-WF_2^*\Big)=0,$$
completing the proof.
\end{proof}

 \

 \begin{prop}\label{omega}
   The differential operator $T$ commutes with the band-limiting operator $\chi_{\Omega}$.
 \end{prop}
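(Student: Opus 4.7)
The plan is to verify the commutation $T\chi_{\Omega} = \chi_{\Omega} T$ by showing that the local operator $T$ preserves both subspaces $\chi_{\Omega}L^2(W)$ (functions supported on $[a,\Omega]$) and $(1-\chi_{\Omega})L^2(W)$. The nontrivial point is that when $T$ is applied to a function with a jump at $x=\Omega$, no distributional ($\delta_{\Omega}$ or $\delta'_{\Omega}$) contributions should be produced; otherwise these would spoil the support invariance.

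The crucial algebraic input is already contained in the proof of Proposition~\ref{sym}, where the coefficients of $T = \partial^2 \tilde F_2 + \partial \tilde F_1 + \tilde F_0$ were shown to satisfy $\tilde F_2 = (x-\Omega)F_2$ and $\tilde F_1 = (x-\Omega)F_1 + F_2$. Evaluating at $x = \Omega$ yields the two boundary identities
\[
\tilde F_2(\Omega) = 0, \qquad \tilde F_1(\Omega) = \tilde F_2'(\Omega) = F_2(\Omega),
\]
which are precisely the relations needed to force all boundary $\delta$-contributions to cancel.

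Concretely, I would take smooth $f$, let $g = \chi_{\Omega} f$, and use the distributional formulas $g' = f'\chi_{\Omega} - f(\Omega)\delta_{\Omega}$ and $g'' = f''\chi_{\Omega} - f'(\Omega)\delta_{\Omega} - f(\Omega)\delta'_{\Omega}$. Expanding $gT = g''\tilde F_2 + g'\tilde F_1 + g\tilde F_0$, the regular part is manifestly $(fT)\chi_{\Omega} = \chi_{\Omega}(fT)$. Collecting the singular pieces via the rules $\phi\,\delta_{\Omega} = \phi(\Omega)\delta_{\Omega}$ and $\phi\,\delta'_{\Omega} = \phi(\Omega)\delta'_{\Omega} - \phi'(\Omega)\delta_{\Omega}$, the coefficient of $\delta'_{\Omega}$ becomes $-f(\Omega)\tilde F_2(\Omega)$ and the coefficient of $\delta_{\Omega}$ reduces to
\[
f(\Omega)\bigl(\tilde F_2'(\Omega) - \tilde F_1(\Omega)\bigr) - f'(\Omega)\tilde F_2(\Omega),
\]
both of which vanish by the boundary identities above. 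Hence $(\chi_{\Omega} f)T = \chi_{\Omega}(fT)$.

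The main subtlety I anticipate is bookkeeping in the matrix-valued right-action setting: one must track that the matrix coefficients $\tilde F_j$ multiply the distributions $\delta_{\Omega}$ and $\delta'_{\Omega}$ in the correct order. Because these are scalar distributions they commute freely with matrix coefficients, so the matrix case follows from the scalar computation without modification; the same argument applied to $1-\chi_{\Omega}$ gives the complementary invariance and completes the proof.
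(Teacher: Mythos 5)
Your proof is correct, but it takes a genuinely different route from the paper's. The paper argues abstractly through symmetry: having shown in Proposition \ref{sym} that $T$ is symmetric with respect to $W$ both on $[a,b]$ and on $[a,\Omega]$, it obtains the commutation weakly via the chain $\langle (\chi_{\Omega} f)T, g\rangle = \langle \chi_{\Omega} f, gT\rangle = \langle f, gT\rangle_{\Omega} = \langle fT, g\rangle_{\Omega} = \langle fT\chi_{\Omega}, g\rangle$, an argument that passes through the weighted inner products and hence rests on hypothesis \eqref{HYP}. You instead verify the pointwise identity $(f\chi_{\Omega})T=(fT)\chi_{\Omega}$ --- which is exactly the reduction the paper itself records in the first line of its proof --- by a local distributional computation at the cut, and your bookkeeping checks out: from $\tilde F_2=(x-\Omega)F_2$ and $\tilde F_1=(x-\Omega)F_1+F_2$ (algebraic consequences of \eqref{operatorT2}, requiring no hypotheses) one gets $\tilde F_2(\Omega)=0$ and $\tilde F_1(\Omega)=\tilde F_2'(\Omega)=F_2(\Omega)$, which kill the coefficients of both $\delta'_{\Omega}$ and $\delta_{\Omega}$; since the surviving parts are locally integrable, the distributional identity does translate into the weak operator statement on the dense set of smooth functions, and your observation about the right-action matrix ordering is also sound because $\delta_{\Omega}$, $\delta'_{\Omega}$ are scalar. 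The comparison is instructive: your argument is purely local, never mentions the weight $W$, and uses neither \eqref{HYP} nor any symmetry of $T$ --- it isolates the factor $(x-\Omega)$ in the leading coefficient as the sole mechanism behind commutation with $\chi_{\Omega}$ (in the spirit of \cite{Walter}), and combined with the Remark following this proposition it even gives an alternative derivation of the $[a,\Omega]$-symmetry half of Proposition \ref{sym}. The paper's argument, in exchange, is shorter once Proposition \ref{sym} is in hand and stays entirely inside the weighted-$L^2$ framework where the time-and-band limiting operators act.
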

 \begin{proof}
	 Let us observe that $T\chi_{\Omega}=\chi_{\Omega} T$ if and only if $(fT)\chi_{\Omega}=(f\chi_{\Omega})T$, for all  smooth enough  $f\in L^2(W)$.
Since the operator $T$ is symmetric with respect to $W$ in $[a,b]$ and also in $[a, {\Omega}]$ we have
\begin{align*}
  \langle (\chi_{\Omega} f)T, g\rangle & = \langle \chi_{\Omega} f , gT\rangle = \int_a^b \chi_{\Omega}(x) f(x) W(x)(gT)^*(x)\, dx= \int_a^{\Omega}  f(x) W(x)(gT)^*(x)\, dx\\
  & = \langle f, gT\rangle_{\Omega} = \langle fT, g\rangle_{\Omega}= \int_a^{\Omega} (fT)(x)W(x) g^*(x)dx= \int_a^b (fT)(x)\chi_{\Omega}(x)W(x) g^*(x)dx\\
  & =  \langle  fT \chi_{\Omega}, g\rangle,
\end{align*}
	 for  all smooth enough  $f$ and $g$. Hence, $T$ commutes with $\chi_{\Omega}$.
 \end{proof}

\begin{remark}
We observe that if $T$ is a symmetric operator with respect to $W$ in $[a,b]$ then $T$ commutes with $\chi_{\Omega}$ if and only if $T$ is symmetric with respect to $W$ in $[a,{\Omega}]$.
\end{remark}

\begin{prop}\label{Ttridiagonal} %Let $T$ be the symmetric operator defined in \eqref{operatorT}
   For any $n\ge0$, there exist matrices $X_n$, $Y_n$ and $Z_n$ such that
   $$Q_n T= X_n Q_{n+1}+Y_n Q_{n}+ Z_n Q_{n-1}.$$
   Moreover $X_n^*=Z_{n+1}$ and $Y_n^*=Y_n$, with the convention $Q_{-1}=0$.
\end{prop}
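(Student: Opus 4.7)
The approach is to expand $Q_nT$ in the orthonormal basis $\{Q_k\}_{k\ge0}$ and use the symmetry of $T$ (Proposition~\ref{sym}) to force the expansion to be tridiagonal and self-adjoint. The first step in this plan is a degree count. Since each coefficient $F_j$ in $D=\partial^2F_2+\partial F_1+F_0$ has degree at most $j$, the operator $D$ maps polynomials of degree at most $n$ to polynomials of degree at most $n$. Inspecting each piece of $T=xD+Dx-2\Omega D-(\Lambda_{N+1}+\Lambda_N)x+M$, I would check that $Q_n\cdot xD=(xQ_n)D$, $Q_n\cdot Dx=x(Q_nD)$ and $Q_n\cdot(\Lambda_{N+1}+\Lambda_N)x=xQ_n(\Lambda_{N+1}+\Lambda_N)$ all have degree at most $n+1$, whereas $Q_nD$ and $Q_nM$ have degree at most $n$. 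Consequently $Q_nT$ is a matrix polynomial of degree at most $n+1$.

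Orthonormality then yields a unique expansion $Q_nT=\sum_{k=0}^{n+1}X_{n,k}Q_k$ with $X_{n,k}=\langle Q_nT,Q_k\rangle$; here one uses $\langle AQ_j,Q_k\rangle=A\langle Q_j,Q_k\rangle=A\delta_{jk}I$. The companion identity $\langle Q_n,AQ_j\rangle=\langle Q_n,Q_j\rangle A^*=\delta_{nj}A^*$ shows that for the analogous expansion of $Q_kT$ one has $\langle Q_n,Q_kT\rangle=X_{k,n}^*$ whenever $n\le k+1$. I now invoke Proposition~\ref{sym}: because $T$ is symmetric with respect to $W$ on $[a,b]$ and all polynomials are smooth, $\langle Q_nT,Q_k\rangle=\langle Q_n,Q_kT\rangle$ for every $n,k$, giving the reciprocity
\[X_{n,k}=X_{k,n}^*\qquad\text{whenever both sides are a priori nonzero.}\]

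When $k<n-1$ the right side vanishes since $Q_kT$ has degree at most $k+1<n$ and is thus orthogonal to $Q_n$; combined with the automatic vanishing for $k>n+1$ coming from Step~1, this confines the expansion of $Q_nT$ to the three indices $k\in\{n-1,n,n+1\}$. Setting $X_n:=X_{n,n+1}$, $Y_n:=X_{n,n}$, $Z_n:=X_{n,n-1}$ gives $Q_nT=X_nQ_{n+1}+Y_nQ_n+Z_nQ_{n-1}$, and the reciprocity translates into $Y_n^*=Y_n$ and $X_n^*=X_{n,n+1}^*=X_{n+1,n}=Z_{n+1}$; the convention $Q_{-1}=0$ covers the endpoint $n=0$. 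I do not anticipate a substantive obstacle: the only step needing any care is the degree bookkeeping, after which the tridiagonal form and the adjointness relations are forced by the symmetry of $T$ and the orthonormality of $\{Q_k\}$.
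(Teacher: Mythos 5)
Your proposal is correct and takes essentially the same route as the paper's own proof: a degree count showing $Q_nT$ is a polynomial of degree at most $n+1$, then the symmetry of $T$ (Proposition \ref{sym}) applied to $\langle Q_nT,Q_j\rangle=\langle Q_n,Q_jT\rangle$ both to annihilate the coefficients with $j<n-1$ and to extract $X_n^*=Z_{n+1}$, $Y_n^*=Y_n$. Your write-up merely spells out the degree bookkeeping and the left-module orthonormality conventions in more detail than the paper does.
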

\begin{proof}
	For any $n$,   $Q_nT$ is a polynomial of degree $n+1$ or less, since $M$ is a matrix independent of $x$. Hence $  Q_n T= \sum_{j=0}^{n+1}  K_{n,j} Q_j$, for some matrices $\{K_{n,j}\}$.

It is easy to see that, since $T$ is symmetric, we  have
$$\langle Q_nT,Q_j\rangle=\langle Q_n,Q_jT\rangle=0,  \text{ for all } j<n-1.$$
hence $$  Q_n T= \sum_{j=n-1}^{n+1}  K_{n,j} Q_j=X_n Q_{n+1}+Y_n Q_{n}+ Z_n Q_{n-1}.$$

Now we observe that $X_n=\langle Q_nT, Q_{n+1}\rangle=\langle Q_n, Q_{n+1}T\rangle=  Z_{n+1}^*$ and that
$Y_n=\langle Q_nT, Q_{n}\rangle=\langle Q_n, Q_{n}T\rangle=  Y_{n}^*$.
This concludes the proof.
\end{proof}

\begin{cor} \label{blockT} We have %the following expression f
 $$X_n= \|R_n\|^{-1}\big ( \Lambda_{n+1}+ \Lambda_n-\Lambda_{N+1}- \Lambda_N \big )\|R_{n+1}\|,$$
	where $\{R_n\}_n$ is the sequence of  monic orthogonal polynomials.

 In particular $X_N=Z_{N+1}=0$.
\end{cor}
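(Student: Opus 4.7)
The plan is to compute $X_n$ by working in the monic basis. Writing $R_n T = \tilde X_n R_{n+1} + \tilde Y_n R_n + \tilde Z_n R_{n-1}$ and translating via $Q_n = S_n R_n$ gives $X_n = S_n \tilde X_n S_{n+1}^{-1} = \|R_n\|^{-1}\, \tilde X_n\, \|R_{n+1}\|$. So it suffices to identify $\tilde X_n$, which, since $R_{n+1} = x^{n+1} I + (\text{lower order})$, is just the leading matrix coefficient of $R_n T$ in $x^{n+1}$.

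I will use the form \eqref{operatorT2}, which rewrites $T$ as $2D(x-\Omega) + 2\partial F_2 + F_1 - (\Lambda_{N+1}+\Lambda_N)x + M$. Applied on the right of $R_n = x^n I + (\text{lower order})$, the eigenvalue relation $R_n \cdot D = \Lambda_n R_n$ from \eqref{eigenvalues} handles the $D$-term, the constant-matrix term $R_n M$ contributes nothing to $x^{n+1}$, and the derivative and polynomial-multiplication pieces are immediate. Reading off the coefficient of $x^{n+1}$ yields
\[
\tilde X_n = 2\Lambda_n + 2n F_{22} + F_{11} - (\Lambda_{N+1}+\Lambda_N).
\]

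The final step is a purely algebraic simplification: from the explicit expression \eqref{moneig} one reads off $\Lambda_{n+1} - \Lambda_n = 2n F_{22} + F_{11}$, so $2\Lambda_n + 2n F_{22} + F_{11} = \Lambda_n + \Lambda_{n+1}$, giving $\tilde X_n = \Lambda_{n+1} + \Lambda_n - \Lambda_{N+1} - \Lambda_N$ and hence the claimed formula for $X_n$. Setting $n = N$ makes the bracket vanish identically, so $X_N = 0$, and then $Z_{N+1} = X_N^{*} = 0$ follows from Proposition \ref{Ttridiagonal}. No real obstacle is expected: the computation is mechanical, with the only care needed being to preserve the order of non-commuting matrix factors (scalar factors of $x$ move freely, but the matrix coefficients $\Lambda_n$, $F_{ij}$, $S_n$ must stay in the order in which they appear).
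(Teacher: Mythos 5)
Your proposal is correct, but it reaches the formula by a genuinely different computation than the paper. The paper works entirely with inner products: it expands $\langle R_nT,R_{n+1}\rangle$ using the form \eqref{operatorT}, discards the $-2\Omega D$ and $M$ terms because $R_nD=\Lambda_nR_n$ and $R_nM$ have degree $n$ and are orthogonal to $R_{n+1}$, and then evaluates the surviving terms $\langle R_n(xD+Dx-(\Lambda_{N+1}+\Lambda_N)x),R_{n+1}\rangle$ by combining the three-term recursion \eqref{ttrrmonic} with the eigenvalue relation \eqref{eigenvalues}, obtaining $(\Lambda_{n+1}+\Lambda_n-\Lambda_{N+1}-\Lambda_N)\langle R_{n+1},R_{n+1}\rangle$ before renormalizing by $\|R_n\|^{-1}$ and $\|R_{n+1}\|^{-1}$. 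You instead identify $\tilde X_n$ as the left coefficient of $x^{n+1}$ in $R_nT$ (legitimate, since $R_{n+1}$ is monic and scalar powers of $x$ commute with matrix coefficients), read that coefficient off term by term from \eqref{operatorT2}, and close with the algebraic identity $\Lambda_{n+1}-\Lambda_n=2nF_{22}+F_{11}$, which indeed follows from \eqref{moneig}; your translation $X_n=S_n\tilde X_nS_{n+1}^{-1}=\|R_n\|^{-1}\tilde X_n\|R_{n+1}\|$ via Proposition \ref{Ttridiagonal} is also exact. The trade-off: the paper's route never invokes the explicit eigenvalue formula \eqref{moneig} -- it works abstractly from the bispectral data (eigenvalue equation plus recursion) and computes directly the quantity $\langle Q_nT,Q_{n+1}\rangle$ that defines $X_n$; your route bypasses the recursion and all symmetry-based inner-product bookkeeping at the cost of using the concrete degree structure of $D$'s coefficients encoded in \eqref{moneig}, making the key step a one-line coefficient count. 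Both arguments conclude $X_N=0$ and $Z_{N+1}=X_N^*=0$ identically, so there is no gap.
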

\begin{proof}
From \eqref{operatorT}, by using the three term recursion relation \eqref{ttrrmonic} and \eqref{eigenvalues}, we have
\begin{equation*}\begin{split}
\langle R_nT,R_{n+1}\rangle &=
\langle (R_n)(xD+Dx-2{\Omega} D-(\Lambda_{N+1}+\Lambda_N) x+M),R_{n+1}\rangle\\
 &= \langle (R_n)(xD+Dx-(\Lambda_{N+1}+\Lambda_N) x),R_{n+1}\rangle\\
 &= \langle R_{n+1}D+R_{n}\Lambda_{n}x-R_{n+1}(\Lambda_{N+1}+\Lambda_N),R_{n+1}\rangle
	\intertext{(and since $\{R_{n}\}$ is the monic sequence of orthogonal polynomials) }
& =\langle R_{n+1}\Lambda_{n+1}+R_{n+1}\Lambda_{n}-(\Lambda_{N+1}+\Lambda_N)R_{n+1},R_{n+1}\rangle\\
&=(\Lambda_{n+1}+\Lambda_n-\Lambda_{N+1}-\Lambda_N)\langle R_{n+1},R_{n+1}\rangle.
\end{split}
\end{equation*}
Hence, by using that $Q_n=\|R_n\|^{-1} R_n$ we get
\begin{align*}
\langle Q_nT,Q_{n+1}\rangle & =\|R_{n}\|^{-1}(\Lambda_{n+1}+\Lambda_n-\Lambda_{N+1}-\Lambda_N)\langle R_{n+1},R_{n+1}\rangle \|R_{n+1}\|^{-1}\\
 & =\|R_{n}\|^{-1}(\Lambda_{n+1}+\Lambda_n-\Lambda_{N+1}-\Lambda_N)\|R_{n+1}\|.
\end{align*}
By Proposition \ref{Ttridiagonal} we know that $\langle Q_nT,Q_{n+1}\rangle=\langle X_nQ_{n+1},Q_{n+1}\rangle=X_n$. Thus, the proof is complete.
\end{proof}

 \begin{prop}\label{ene}
   The differential operator $T$ commutes with  the time-limiting operator $\chi_N$.
 \end{prop}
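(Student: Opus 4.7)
The plan is to show the commutativity directly by expanding both $(f\chi_N)T$ and $(fT)\chi_N$ in the orthonormal basis $\{Q_n\}$ and comparing coefficients, relying on the three term recursion $Q_n T=X_n Q_{n+1}+Y_n Q_n+Z_n Q_{n-1}$ given by Proposition \ref{Ttridiagonal} together with the crucial vanishing $X_N=Z_{N+1}=0$ from Corollary \ref{blockT}.

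First I would reduce the claim $T\chi_N=\chi_N T$ to the right-action identity $(f\chi_N)T=(fT)\chi_N$ for every smooth enough $f\in L^2(W)$. By the definition \eqref{chiN} the left-hand side is
\[
(f\chi_N)T=\sum_{n=0}^{N}\langle f,Q_n\rangle\,Q_n T =\sum_{n=0}^{N}\langle f,Q_n\rangle\bigl(X_n Q_{n+1}+Y_n Q_n+Z_n Q_{n-1}\bigr).
\]
For the right-hand side, I would use Proposition \ref{sym} (symmetry of $T$ on $[a,b]$) to write $\langle fT,Q_n\rangle=\langle f,Q_n T\rangle$, and then use the standard identity $\langle f,AQ\rangle=\langle f,Q\rangle A^*$ together with the relations $X_n^*=Z_{n+1}$ and $Y_n^*=Y_n$ from Proposition \ref{Ttridiagonal} to obtain
\[
\langle fT,Q_n\rangle=\langle f,Q_{n+1}\rangle Z_{n+1}+\langle f,Q_n\rangle Y_n+\langle f,Q_{n-1}\rangle X_{n-1}.
\]

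Next I would collect, on each side, the coefficient of $Q_k$ for $0\le k\le N+1$. For $0\le k\le N-1$ both sides produce exactly
\[
\langle f,Q_{k-1}\rangle X_{k-1}+\langle f,Q_k\rangle Y_k+\langle f,Q_{k+1}\rangle Z_{k+1},
\]
so these terms agree automatically. The only possible discrepancies are at the boundary indices: the coefficient of $Q_N$ in $(fT)\chi_N$ contains an extra term $\langle f,Q_{N+1}\rangle Z_{N+1}$ not present in $(f\chi_N)T$, and the coefficient of $Q_{N+1}$ in $(f\chi_N)T$ is $\langle f,Q_N\rangle X_N$ while in $(fT)\chi_N$ there is no $Q_{N+1}$ term at all.

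This is exactly where Corollary \ref{blockT} does the work: the construction \eqref{operatorT} of $T$, and in particular the term $-(\Lambda_{N+1}+\Lambda_N)x$, was engineered so that $X_N=Z_{N+1}=0$. Thus both boundary discrepancies vanish and the two expansions agree coefficient by coefficient, yielding $(f\chi_N)T=(fT)\chi_N$ and hence $T\chi_N=\chi_N T$. There is no real obstacle in the argument beyond the bookkeeping; the genuine content has already been absorbed into the proof of Corollary \ref{blockT}, where the hypothesis \eqref{HYP} and the particular form of $T$ conspired to make the tridiagonal action truncate at the cutoff level $N$.
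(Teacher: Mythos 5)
Your proof is correct and takes essentially the same route as the paper's: expand $(f\chi_N)T$ and $(fT)\chi_N$ in the orthonormal basis using the tridiagonal action from Proposition \ref{Ttridiagonal}, invoke the symmetry of $T$ together with $X_n^*=Z_{n+1}$, $Y_n^*=Y_n$, and kill the two boundary discrepancies at indices $N$ and $N+1$ via $X_N=Z_{N+1}=0$ from Corollary \ref{blockT}. Your explicit coefficient-by-coefficient accounting at the cutoff is, if anything, slightly more careful than the paper's index-shift presentation.
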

 \begin{proof}
	 Let  $f$ be a smooth enough function  in $L^2(W)$, by using Proposition \ref{Ttridiagonal}, the fact that $T$ is symmetric and the explicit expression in \eqref{chiN} we have
\begin{align*}
  fT\chi_N& = \sum_{n=0}^N \langle fT, Q_n\rangle Q_n %= \sum_{n=0}^N \langle f, Q_nT\rangle Q_n \\
 % &
 =  \sum_{n=0}^N \Big( \langle f, Q_{n+1}\rangle  X_n^*+\langle f, Q_{n}\rangle  Y_n^*+ \langle f, Q_{n-1}\rangle  Z_n^* \Big)Q_n.
\end{align*}

On the other hand,
\begin{align*}
  (f\chi_N) T & = \sum_{n=0}^N \langle f, Q_n\rangle Q_nT = \sum_{n=0}^N \langle f, Q_n\rangle \left(X_n Q_{n+1}+Y_n Q_{n}+ Z_n Q_{n-1}\right)\\
  & = \sum_{n=1}^{N+1} \langle f, Q_{n-1}\rangle X_{n-1} Q_n+ \sum_{n=0}^{N} \langle f, Q_{n}\rangle Y_{n} Q_n
  +\sum_{n=0}^{N-1} \langle f, Q_{n+1}\rangle Z_{n+1} Q_n,
\intertext{by Corollary \ref{blockT}  $X_N=0$, thus }
 %(f\chi_N) T
   (f\chi_N) T & = \sum_{n=0}^N \Big( \langle f, Q_{n-1}\rangle  X_{n-1}+\langle f, Q_{n}\rangle  Y_n+ \langle f, Q_{n+1}\rangle  Z_{n+1} \Big)Q_n.
 \end{align*}
 Now the proposition follows from  the fact that $X_n^*=Z_{n+1}$ and $Y_n^*=Y_n$, see Corollary \ref{blockT}.
 \end{proof}
\
\begin{thm}\label{main}
The second order differential operator $T$ is symmetric and commutes with the  time-band-limiting operators $EE^*$ and $\mathcal F E^*E \mathcal F^{-1}$.
\end{thm}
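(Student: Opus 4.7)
The theorem is essentially an assembly of the three propositions that immediately precede it, so the plan is short. The symmetry of $T$ with respect to $W$ on $[a,\Omega]$ (and on $[a,b]$) is exactly the content of Proposition \ref{sym}, so the first assertion requires no additional work.

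For the commutativity statements, the key observation is the factorizations
\[
EE^{*}=\chi_{\Omega}\,\chi_{N}\,\chi_{\Omega},\qquad \mathcal{F} E^{*}E\,\mathcal{F}^{-1}=\chi_{N}\,\chi_{\Omega}\,\chi_{N},
\]
both of which are already recorded in the Preliminaries. Thus the plan is to note that if an operator commutes with two operators $A$ and $B$ then it commutes with any word in $A$ and $B$; applying this to $A=\chi_{\Omega}$ and $B=\chi_{N}$ gives
\[
T\,\chi_{\Omega}\chi_{N}\chi_{\Omega}=\chi_{\Omega}T\,\chi_{N}\chi_{\Omega}=\chi_{\Omega}\chi_{N}T\,\chi_{\Omega}=\chi_{\Omega}\chi_{N}\chi_{\Omega}\,T,
\]
and analogously $T\,\chi_{N}\chi_{\Omega}\chi_{N}=\chi_{N}\chi_{\Omega}\chi_{N}\,T$. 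The two hypotheses needed here, namely $T\chi_{\Omega}=\chi_{\Omega}T$ and $T\chi_{N}=\chi_{N}T$, are precisely Propositions \ref{omega} and \ref{ene}.

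There is no genuine obstacle: all the substantive work has been carried out already, first in establishing that the particular second-order operator $T$ defined in \eqref{operatorT} is symmetric on the truncated interval $[a,\Omega]$ (which relied on the hypothesis \eqref{HYP} and on the vanishing of the boundary terms at $x=\Omega$ forced by the factor $x-\Omega$ in $\tilde{F}_{2}$ and $\tilde{F}_{1}$), and then in showing that the action of $T$ on the orthonormal polynomials is tridiagonal with the matching ``adjoint'' relations $X_{n}^{*}=Z_{n+1}$, $Y_{n}^{*}=Y_{n}$, together with the crucial vanishing $X_{N}=0$ coming from Corollary \ref{blockT}. The only thing worth emphasizing in the write-up is that the commutation with $\chi_{N}$ uses both the symmetry of $T$ and the truncation identity $X_{N}=0$, while the commutation with $\chi_{\Omega}$ uses only the symmetry of $T$ on $[a,\Omega]$; once both are in hand, the theorem is immediate.
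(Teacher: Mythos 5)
Your proposal is correct and matches the paper's own proof, which likewise cites Proposition \ref{sym} for symmetry, recalls the factorizations $EE^{*}=\chi_{\Omega}\chi_{N}\chi_{\Omega}$ and $\mathcal{F}E^{*}E\,\mathcal{F}^{-1}=\chi_{N}\chi_{\Omega}\chi_{N}$, and concludes from Propositions \ref{omega} and \ref{ene} via the same observation that commuting with $\chi_{\Omega}$ and $\chi_{N}$ implies commuting with any word in them. Your explicit chain of equalities and the remark on which ingredients ($X_{N}=0$ versus symmetry on $[a,\Omega]$) feed into each proposition merely spell out what the paper leaves implicit.
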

\begin{proof}
The symmetry of $T$ is proved in \ref{sym}.
Recalling that  $EE^*=\chi_{\Omega} \chi_N\chi_{\Omega}$ and $\mathcal F E^*E \mathcal F^{-1} =\chi_N \chi_{\Omega}\chi_N$, the proof follows from Proposition \ref{omega} and Proposition \ref{ene}.
\end{proof}

So far the operators $D$, $S$, $T$ act in $L^2(W)$. Conjugating with  $\mathcal F$ you get difference operators acting in $\ell^2(M_R,\NN_0)$. If we define $$L=\mathcal F^{-1} T \mathcal F,$$
the following result is straightforward.

\begin{cor}
The difference operator $L$ is given by a tridiagonal hermitian semi-infinite matrix, with $R\times R$-block entries, and it commutes with the time-band-limiting operators $ \mathcal F^{-1}EE^*\mathcal F$ and $ E^*E$. The operator $L$, in the standard basis of $\ell^2(M_R,\NN_0)$, is explicitly given by
$$L=\left( \begin{matrix} Y_0& X_0^*&0&0&0&\cdots\\ X_0&Y_1& X_1^*&0&0&\cdots\\ 0&X_1&Y_2& X_2^*&0&\cdots
\\ 0&0&X_2&Y_3& X_3^*&\cdots
\\ 0&0&0&X_2&Y_4&\cdots
\\ \vdots&\vdots&\vdots&\vdots&\vdots&\ddots
\end{matrix}\right),$$
with $X_j$ and $Y_j$ given in Proposition \ref{Ttridiagonal} and Corollary \ref{blockT}.
\end{cor}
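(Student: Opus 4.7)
The plan is to show that $L=\mathcal F^{-1}T\mathcal F$ is completely determined by how $T$ acts on the orthonormal basis $\{Q_n\}$, and then to read off its block matrix entries directly from Proposition \ref{Ttridiagonal}. Since $\mathcal F$ is the isometry $\{C_n\}\mapsto \sum_m C_m Q_m$, for any $\{C_n\}\in\ell^2(M_R,\NN_0)$ we have
\[
\mathcal F(L\{C_n\})=T\,\mathcal F(\{C_n\})=\Bigl(\sum_m C_m Q_m\Bigr)T=\sum_m C_m\,(Q_m T),
\]
where the last equality uses that the right-acting differential operator $T$ commutes past the constant matrix coefficients $C_m$.

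Applying Proposition \ref{Ttridiagonal} to each $Q_m T$ and reindexing the sum yields
\[
\mathcal F(L\{C_n\})=\sum_n \bigl(C_{n-1}X_{n-1}+C_n Y_n+C_{n+1}Z_{n+1}\bigr)\,Q_n,
\]
so by uniqueness of the expansion in the orthonormal system $\{Q_n\}$ one concludes $(L\{C_n\})_n=C_{n-1}X_{n-1}+C_n Y_n+C_{n+1}Z_{n+1}$ (with the convention $C_{-1}=0$). This shows that $L$ is block tridiagonal, and substituting the identities $Z_{n+1}=X_n^*$ and $Y_n=Y_n^*$ from Proposition \ref{Ttridiagonal}, together with the explicit formula for $X_n$ from Corollary \ref{blockT}, recovers exactly the hermitian matrix displayed in the statement.

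For the commutativity assertions, the intertwining relations $\mathcal F E^*E\mathcal F^{-1}=\chi_N\chi_\Omega\chi_N$ and $EE^*=\chi_\Omega\chi_N\chi_\Omega$ give
\[
L\,E^*E-E^*E\,L=\mathcal F^{-1}\bigl[T,\chi_N\chi_\Omega\chi_N\bigr]\mathcal F,
\]
together with the analogous identity for the commutator of $L$ with $\mathcal F^{-1}EE^*\mathcal F$, involving $[T,\chi_\Omega\chi_N\chi_\Omega]$. Both commutators vanish by Theorem \ref{main}. The only real difficulty is bookkeeping: every operator in the paper acts on functions from the right, so one must carefully track whether each block entry of the displayed matrix represents left-on-column or right-on-row multiplication. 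Apart from that, each claim of the corollary is an immediate consequence of Proposition \ref{Ttridiagonal}, Corollary \ref{blockT}, and Theorem \ref{main}, which is why the paper is entitled to call the proof ``straightforward''.
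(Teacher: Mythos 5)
Your proposal is correct and is exactly the ``straightforward'' argument the paper intends but does not write out: conjugating $T$ by $\mathcal F$, expanding each $Q_mT$ via Proposition \ref{Ttridiagonal}, reading off the block entries with $Z_{n+1}=X_n^*$ and $Y_n=Y_n^*$, and deducing both commutation relations from Theorem \ref{main} since conjugation by $\mathcal F$ preserves commutators. As a small bonus, your index bookkeeping implicitly corrects a typo in the paper's displayed matrix, where the fifth row should contain $X_3$ rather than $X_2$.
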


\begin{remark}
 From Corollary \ref{blockT} it is clear that $L$ breaks into two blocks, an upper-left block of size $(N+1)\times(N+1)$ yielding a matrix such as the one displayed in \cite{GPZ1} and a lower-right block which is semi-infinite.
\end{remark}

%It is worth mentioning that the connection between looking at $E E^*$ and looking at $E^*E$ has been considered in \cite{GLP}.

\section{Examples}\label{ex}
\subsection{Scalar cases}

In the scalar case condition \eqref{HYP} is automatically satisfied. For several examples of a commuting differential operator given by \eqref{operatorT} one can see \cite{GVZ1_Heun}.

\subsection{Matrix Gegenbauer weight}
In \cite{PZ16} we study $2\times 2$ matrix-valued orthogonal polynomials associated with spherical functions in the $q$-dimensional sphere $S^q$ (  originally $q$ was a natural number, but these results were later extended to any real positive number). The weight matrix, depending on parameters $0<p<q$, is given by
$$W(x)= (1-x^2)^{\frac q2 -1} \begin{pmatrix}
 px^2+q-p & -qx\\ -qx& (q-p)x^2+p
\end{pmatrix}, \qquad x\in [-1,1].
$$
In this case there exist four linearly independent symmetric differential operators of degree two in the algebra $D(W)$, namely $D_1, D_2, E_3$ and $E_4$. See Section 5 in \cite{PZ16}, and the last paragraph in this example.

In \cite{GPZ1}, \cite{GPZ2} and \cite{CGPZ} we considered the time-band limiting operators $E^*E$  and $EE^*$ for this example. For given $N$ and $\Omega$, we found
a symmetric tridiagonal matrix $L$, with simple spectrum, commuting with the block matrix $E^*E$  and a self-adjoint differential operator $\tilde D$ commuting with the integral operator $EE^*$.
% The construction of this differential operator $\tilde D$ is built up from a symmetric differential operator $D\in D(W)$ and  from some properties of a sequence of orthonormal polynomials.

\smallskip
The results on the present  paper give an unified way to obtain such a commuting operators in both situations: Starting with a symmetric differential operator of order two, we search for a matrix $M$ such that condition \eqref{HYP} is satisfied and we build up the operator $T$ by the formula \eqref{operatorT}.

%Given a symmetric operator $D$ and $\Lambda_n$ the eigenvalues of the monic orthogonal polynomials, we have that
%a matrix $M$ satisfies the condition \eqref{HYP} if and only if
%$$\left((M_{11}-\bar M_{22}) -x( \Gamma_{11}-\bar \Gamma_{22}) \right) W_{1,2}(x)+ (M_{12}- x\Gamma_{12}) W_{22}(x)- (\bar M_{21}- x\bar \Gamma_{21}) W_{11}(x)$$
%with $\Gamma=\Lambda_N+\Lambda_{N+1}$.
%

\

The monic orthogonal polynomials $\{R_n\}$ are eigenfunctions of the differential operators $D_1$ and $D_2$, whose eigenvalues
 are respectively %he corresponding eigenvalues are
%\begin{align*}
$$  \Lambda_n(D_1)=
  \left(\begin{smallmatrix}
  (n+p)(n+q-p+1) & 0\\ 0& 0
\end{smallmatrix}\right) \quad \text{and } \quad
\Lambda_n(D_2) = \left(\begin{smallmatrix}
 0 & 0\\ 0& (n+p+1)(n+q-p)
\end{smallmatrix}\right).
$$
%\end{align*}

For the differential operators $D_1$ and $D_2$,  the matrices $M_1$ and $M_2$ given by
$$M_1=\tfrac{(-2N(N+p+1)(N+q-p+1)+q-2p)}{q-2p}\begin{pmatrix}
  0&q-p\\p&0
\end{pmatrix},$$
$$M_2=\tfrac{(2N(N+p+1)(N+q-p+1)+q-2p)}{q-2p}\begin{pmatrix}
  0&q-p\\p&0
\end{pmatrix},$$
satisfy the requirement that
$$ \Big(M_1-x (\Lambda_{N+1}(D_1)+\Lambda_N(D_1)) \Big)W(x) \quad \text{and} \quad \Big(M_2-x (\Lambda_{N+1}(D_2)+\Lambda_N(D_2)) \Big)W(x)$$
are symmetric matrices and therefore they give two differential operators, $T_1$ and $T_2$, commuting with the time and band limiting operators.

The differential commuting operator $\widetilde D$, given in \cite{GPZ2}, is a scalar combination of $T_1+T_2$ and the identity, namely $T_1+T_2=-2\widetilde D+2\Omega(q-p)$. Notice that for $T_1+T_2$ the expression \eqref{operatorT} involves a matrix $M$ that does not depend on $N$, namely
$$M=M_1+M_2=-2\begin{pmatrix}
  0&q-p\\ p&0 \end{pmatrix}.$$

On the other hand, the matrices $L_1,L_2,L_3$ given in \cite{GPZ1} are in the span of $\{\mathcal F^{-1} T_1\mathcal F,\mathcal F^{-1} T_2\mathcal F,I\}$. Furthermore,
$L_1$ and $L_2$ scalar multiples of $\mathcal F^{-1} T_2\mathcal F $ and $\mathcal F^{-1} T_1 \mathcal F$, respectively, and
$$L_3=\frac{p(q+p+1)}{q+2}(L_1+L_2).$$
In \cite{GPZ1} it is proved that $L_1$ and $L_2$ have simple spectrum.

\

  It is worth to notice that for the symmetric differential operators  $E_3$ and $E_4$ in \cite{GPZ2} there is no matrix $M$ satisfying condition \eqref{HYP}.
 This phenomenon, namely that given a weight $W$
one should look at the
algebra $D(W)$ introduced in \cite{CG06}, and for each differential operator in it see if a matrix $M$ satisfying
condition \eqref{HYP} exist will reappear later in example $(4.4)$. When $M$
exists our general result yields a commuting operator $T$.

\subsection{Completing the proof of the result stated in \cite{CG7}}
In \cite{CG7} one looks at matrix valued polynomials which are orthogonal
in the interval $[0,1]$ with respect to the weight density matrix originating in \cite{PT,PR} and given by
\[
W(x) = (1-x)^{\alpha} x^{\beta}
\begin{pmatrix}
\beta+1-kx & (\beta+1-k)x \\
(\beta+1-k)x & (\beta+1-k)x^2
\end{pmatrix}.
\]

%Given the weight one can consider several choices of matrix valued polynomials  orthogonal with respect to $Z(x)$. Two such families feature in \cite{CG7}, one is denoted by $P_n(x)$ and its entries have a certain hypergeometric appearance and a nice expression for their (matrix valued) squared norms leading to a sequence of orthonormal polynomials denoted by $Q_n(x)$.

%\bigskip

The monic orthogonal polynomials $R_n(x)$, associated to this weight $W$ are eigenfunctions of the symmetric differential operator
\[
D = \partial ^2 x (1-x) + \partial(C - xU) - V
\]
with
\[
%\begin{aligned}
C = \begin{pmatrix}
\beta+1 & 1 \\
0 & \beta+3
\end{pmatrix},\ U = \begin{pmatrix}
\alpha+\beta+3 & 0 \\
0 & \alpha+\beta+4
\end{pmatrix},
V = \begin{pmatrix}
0 & 0 \\
k-\beta-1 & \alpha+\beta+2-k
\end{pmatrix}.
%\end{aligned}
\]
\smallskip
This operator acts on the right  and we have
$R_nD = \Lambda_nR _n $,
with
\[
\Lambda_n = \begin{pmatrix}
-n(\alpha+\beta+n+2) & 0 \\
 1+\beta-k & -(n+1)(\alpha+\beta+n+2)+k
\end{pmatrix}.
\]

%\bigskip
%
%
%
%The family $Q_n(x)$  satisfy the same differential equation, and gives rise to the kernel $k(x,y)$ of an integral operator by the usual expression
%
%
%$$k(x,y)=\sum_{j=0}^N Q{^*}_j(x)Q_j(y).$$
%
%
The main take home message in \cite{CG7} is that the differential operator
${\widetilde D}$ given by
\[
{\widetilde D} = (x-\Omega)D - \frac {d}{dx} x(1-x) + \mathcal{P}_N(x),
\]
%
%\noindent with $D$ was given earlier and
%
with
\[
\mathcal{P}_N(x) =
\begin{pmatrix}
x(N^2 + (\alpha+\beta+3)(N+1)) & \alpha+\beta+N+2 \\
x(k-\beta-1) & x(N^2+(\alpha+\beta+4)N+2\alpha+2\beta-k+6)+\beta
\end{pmatrix}\,,
\]
commutes with the integral operator  $S$ given by \eqref{intoper}.
%QUITARIAMOS LO QUE SIGUE EN ROJO whose kernel is $$k(x,y)=\sum_{n=0}^N Q_n^*(x)Q_n(y),$$where  $Q_n(x)$ is a family of orthonormal polynomials.

%\bigskip

%CUIDADO: Lo que aca se llama $T$ es $\alpha$ en las notas

%\bigskip

%As discussed extensively in \cite{CGPZ, GPZ2}, to establish this commutativity we need to show that
%
%\[
%(k^*(x,y)){\tilde D}_x = (k(x,y){\tilde D}_y)^*
%\]
%
%\noindent where ${\tilde D}_x$ and ${\tilde D}_y$ denote the action of ${\tilde D}$ above on the $x$ and $y$ variables, respectively.
%
%
%\bigskip

The argument given in \cite{CG7} consists in verifying certain identities depending on an index $n$. These have been checked with the use of the computer algebra package Maxima up to very large values of $n$, but no analytical proof is given. We  will see below  that the results above complete the arguments in \cite{CG7}.

%\bigskip
%
%The polynomials $P_n(x)$ given earlier are not monic, and the monic ones are
%obtained by premultiplying them by the inverse of the coefficient of $x^n$. This has as a consequence that the eigenvalues going with the differential equation satisfied by the monic orthogonal polynomials are obtained by a proper congugation of the eigenvalues $\wedge_n$ given above. This results in

%\[
%eigen_n = \begin{pmatrix}
%-n(\alpha+\beta+n+2) & 0 \\
% 1+\beta-k & k-(n+1)(n+2)-(n+1)(\alpha+\beta)
%\end{pmatrix},
%\]
%
%
%

One can see that the matrix %Denote by $M$ the matrix
\[
M = \begin{pmatrix}
1+\beta & 2(\alpha+\beta)+2N+5 \\
 0 & 3(1+\beta)
\end{pmatrix},
\]
is such that
$$(M-x(\Lambda_{n+1}+\Lambda_n)) W(x) $$
is a symmetric matrix. Therefore the assumption in \eqref{HYP} is verified
and one can check that the commuting operator in \cite{CG7}  is given according to the recipe in \eqref{operatorT2}
$$  \widetilde D= D (x-{\Omega})+ \partial \, F_2(x)+ \tfrac 12 \Big (F_1(x) -x (\Lambda_{N+1}+\Lambda_N)+M\Big).$$

\subsection{An example violating condition \eqref{HYP}}

Consider the matrix valued polynomials which are orthogonal
in the interval $[0,1]$ with respect to the weight density matrix originating in %\cite{PT,PR}, see also
 \cite{DdI}, Section 3.3, with parameters $\alpha=\beta=0$, $\kappa=1/2$, $t_0=0$ and given by
\[
W(x) =
\begin{pmatrix}
1+x^2 & 1-x \\
1-x & (1-x)^2
\end{pmatrix}.
\]

%Given the weight one can consider several choices of matrix valued polynomials  orthogonal with respect to $Z(x)$. Two such families feature in \cite{CG7}, one is denoted by $P_n(x)$ and its entries have a certain hypergeometric appearance and a nice expression for their (matrix valued) squared norms leading to a sequence of orthonormal polynomials denoted by $Q_n(x)$.

%\bigskip
%The monic orthogonal polynomials $R_n(x)$, associated to this weight $W$ are eigenfunctions

We have that
$$D_+ = \partial^2  \begin{pmatrix}
2(x^2-x) & 2x \\
0 & 0
\end{pmatrix} + \partial \begin{pmatrix}
8x-7 & 7-x \\x-1 & 1
\end{pmatrix}
+\frac 12 \begin{pmatrix} 3 & -5  \\
1 & 3
\end{pmatrix}$$
is a  symmetric differential operator with respect to $W(x)$,  ($\phi^+=1$ in the notation of \cite{DdI}),

%with
%\[
%%\begin{aligned}
%C = \begin{pmatrix}
%2(x^2-x) & 2x \\
%0 & 0
%\end{pmatrix},\ U = \begin{pmatrix}
%8x-7 & 7-x \\
%x-1 & 1
%\end{pmatrix},
%V =\frac 12 \begin{pmatrix}
%3 & -5 \\
%1 & 3
%\end{pmatrix}
%%\end{aligned}
%\]
%
%
%
The monic orthogonal polynomials $R_n(x)$ satisfy $R_n(x)D_+=\Lambda_n(D_+) R_n(x)$, where the eigenvalues are given by
\[
\Lambda_n(D_+)= \begin{pmatrix}
2n^2+6n+3/2 & -n-5/2 \\
n+1/2 & -3/2
\end{pmatrix},
\]
%\bigskip
see \eqref{moneig}.
One can check that in this case condition \eqref{HYP} is not satisfied.

\

 We have ample evidence that, for a given $N$ and $\Omega$, the corresponding
 integral operator commutes with the differential one given by
$$\widetilde D = \partial^2 x(x-1) (x-\Omega) + \partial X + Y$$
with
\begin{align*}
X &= \begin{pmatrix}
5x^2-4 \Omega x-4x +3 \Omega & 2(x-\Omega)  \\
0 & 5x^2-4 \Omega x -2x + \Omega
\end{pmatrix},
\\
Y &= \begin{pmatrix}
 \Omega/ 2-3-N(N+4)x & (\Omega+5)/2 \\
(\Omega-1)/2 & -N(N+4)x- \Omega/ 2
\end{pmatrix}.
%\end{aligned}
\end{align*}

Clearly this differential operator does not have the form advertised in \eqref{operatorT}.  We will see below that our explicit construction yields an interesting result.

\

%The monic orthogonal polynomials $R_n(x)$, are also eigenfunctions of the symmetric differential operator
The weight matrix $W(x)$ admits another symmetric differential operator (with $\phi^-=1/3$)
 $$D_- = \partial^2
 \begin{pmatrix}
0 & 2x \\0 & 2x(1-x)\end{pmatrix}
+ \partial \begin{pmatrix}
-1 & 3-x \\x-1 & -8x+3
\end{pmatrix}
+\frac 12 \begin{pmatrix} 5 & -3  \\
3 & -5
\end{pmatrix}.$$
For $D_-$ condition \eqref{HYP} is, once again, not satisfied.

Nevertheless for the symmetric differential operator
$$\tfrac 12 \left(D_+-D_-\right)= \partial ^2 x(x-1)+ \partial \begin{pmatrix}
4x-3 & 2 \\0 & 4x+1
\end{pmatrix}
+\frac 12 \begin{pmatrix} -1 & -1  \\
-1 & 1
\end{pmatrix}, $$
condition \eqref{HYP} is satisfied with $M=\begin{pmatrix}
  3&-3\\1& -1
\end{pmatrix}. $

We observe that in this case, the eigenvalues of the monic polynomials $R_n$ are given by $$\Lambda_n =n(n+3)+ \frac 12 \begin{pmatrix}
  1&1\\1& -1 \end{pmatrix}. $$
Now it easy to verify that the differential operator $\tilde D$ above is exactly the differential operator $T$ given in \eqref{operatorT} for $D=\tfrac 12 \left(D_+-D_-\right)$, therefore it   commutes with the integral operator $EE^*$ (see Theorem \ref{main}).

\smallskip

 At the end of example $(4.2)$ we alluded to the phenomenon seen above: our method can be applied to some of the operators in
the algebra $D(W)$ but not necessarily to all of them. When the algebra has several generators this increases our chances of being able to use our construction. The next example features a case when there is only one generator of order two.

\subsection{An example showing that bispectrality may not be enough to produce a commuting differential operator}

In this section we discuss an example with a  behavior  quite different
from the ones seen so far.  This example has appeared in \cite{CG06}.

The weight density on the real line is given by
\[W(x)=e^{-x^2-2 x}
\begin{pmatrix}
e^{4 x}+x^2 & x \\
 x & 1
\end{pmatrix}.
\]
This weight gives rise to a bispectral family of polynomials and as observed in \cite{CG06} the algebra of differential operators going with this weight has just one generator of order two.  See also \cite{DG07}.
\bigskip 	

One can easily check that
condition \eqref{HYP}
does not hold in this case for   the operator of order two that generates the algebra.

\bigskip

One could still be able to produce,
for each value of the parameters $N,\Omega$,
a (non-trivial) symmetric second order differential operator that would commute with the kernel
\begin{equation*}
  k_N(x,y)=\sum_{n=0}^N Q_n^*(x)Q_n(y),
\end{equation*}
acting on $(-\infty,\Omega]$, even if this operator is not given by the nice prescription for $T$ above.

\bigskip

We have plenty of evidence that such an operator {\bf does not exist}, at least if we insist that our operator should have polynomial coefficients (this is the case of all known examples so far).  Some of this evidence is described below.

\bigskip

We postulate a commuting symmetric second order differential operator of the form
$$D= \partial ^2 F_2 + \partial  F_1 +F_0,$$
where we allow $F_0,F_1,F_2$ to be polynomials of degree not higher than SIX.

By imposing the necessary condition

$$ k_N(x,y)^* D_x= ( k_N(x,y) D_y )^*,$$
see \eqref{diffkernel}, we deduce that with arbitrary constants $r_1,r_2,r_3$ one has
 \[
F_2(x) = \begin{pmatrix}
(N r_2 -r_1)/(2N) & r_1 x/(2 N) \\
 0 & r_2/2
\end{pmatrix},
\]
 as well as
\[
F_1(x) = \begin{pmatrix}
(N r_2-r_1)(1-x)/N & -(r_1 x^2+2 N r_2 x - r_1 x -N r_2)/N \\
 0 & -r_2 (x+1)
\end{pmatrix},
\]
and finally
\[
F_0(x) = \begin{pmatrix}
-r_1+r_3 & r_1 x - r_2 \\
 0 & r_2 + r_3
\end{pmatrix}.
\]
When we look at one of the boundary conditions, we get that up to a nonzero scalar the value of
$$F_2(\Omega) W(\Omega) $$
is given by
\[
\begin{pmatrix}
( (N r_2-r_1) e^{4 \Omega} + N r_2 \Omega^2)/(2 N)   & r_2 \Omega /2 \\
 r_2 \Omega /2 & r_2/2
\end{pmatrix},
\]
and from here it follows that $r_1,r_2$ both vanish. This implies that $D$
is a scalar multiple of the identity.

\smallskip
 We have not given a proof that a nontrivial commuting differential operator with more complicated coefficients may not exist. However we are confident that this is the case, since looking
at the finite dimensional block-matrix given by
$E^*E$ we can verify that the only block-tridiagonal matrix that commutes with it is the identity matrix.

\smallskip

\end{document}